\newcommand{\ZZ}{\mathbb{Z}}
\newcommand{\RR}{\mathbb{R}}
\newcommand{\EE}{\mathbb{E}}
\newcommand{\NN}{\mathbb{N}}
\newcommand{\N}{\mathcal{N}}
\newtheorem{theorem}{Theorem}[section]
\newtheorem{corollary}[theorem]{Corollary}
\newtheorem{proposition}[theorem]{Proposition}
\newtheorem{lemma}[theorem]{Lemma}
\theoremstyle{definition}
\newtheorem{remark}[theorem]{Remark}
\newtheorem{definition}[theorem]{Definition}
\begin{document}

\title{Lattice points on a curve via $\ell^2$ decoupling}

\author{Daishi Kiyohara}

\address{Department of Mathematics, Harvard University, 1 Oxford Street\\
Cambridge, MA 02138, United States}
\email{dkiyohara@math.harvard.edu}

\maketitle

\begin{abstract}
This paper extends Bombieri and Pila's estimate of lattice points on curves to arbitrary finite sets by incorporating considerations of minimal separation and the doubling constant.
We derive the estimate by establishing the $\ell^2$ decoupling inequality for non-degenerate curves in $\RR^n$.
Additionally, we review the curve-lifting method introduced in Bombieri and Pila's work and establish the estimate of lattice points in the neighborhood of a planar curve.
\end{abstract}

%\keywords{lattice points, $\ell^2$ decoupling, curve-lifting, additive combinatorics}

%\ccode{Mathematics Subject Classification 2010: 11H06, 11P21}

\section{Introduction}
In his pioneering work \cite{J}, Jarn\'ik proved that a strictly convex planar curve of length $\ell$ contains at most $3(4\pi)^{-1/3}\ell^{2/3}+O(\ell^{1/3})$ integer points.
This inequality can be reformulated by fixing a strictly convex compact curve $\Gamma\subset \RR^2$ and examining the set $\Gamma\cap (\frac{1}{N}\ZZ)^2$ of $\frac{1}{N}$-integer points on the curve.
Then Jarn\'ik's theorem implies the upper bound $|\Gamma\cap (\frac{1}{N}\ZZ)^2|= c(\Gamma)N^{\frac{2}{3}}+O(N^{1/3})$ where the constant $c(\Gamma)$ only depends on the length of a curve.

Since Jarn\'ik's work, there have been many advancements strengthening this inequality for planar curves by imposing additional smoothness conditions.
For example, Swinnerton-Dyer \cite{Sw} proved the upper bound $|\Gamma\cap (\frac{1}{N}\ZZ)^2|\lesssim N^{\frac{3}{5}+\epsilon}$ for a strictly convex $C^3$ graph $\Gamma:(x,f(x))$  where the implicit constant depends on the graph and $\epsilon$.
Schmidt refined this result, conjecturing an improved exponent of $\frac{1}{2}$, under the assumption that $f''$ is weakly monotonic and vanishes at most once.
The parabola $\Gamma= \{(x,x^2): x\in [0,1]\}$ demonstrates that the exponent $\frac{1}{2}$ is optimal.

In the remarkable paper \cite{BP}, Bombieri and Pila established new upper bounds under the assumption that the curve is $C^n$ and satisfies the finiteness condition regarding intersections with algebraic curves.
Specifically, \cite[Theorem 1]{BP} implies that for a planar graph $\Gamma:(x,f(x))$ which is $C^{\frac{1}{2}(s+1)(s+2)-1}$ and has uniformly bounded intersection of size $\le S$ with algebraic curves of degree at most $s$ the inequality
\[|\Gamma\cap (\tfrac{1}{N}\ZZ)^2|\le C(\Gamma)N^{\frac{8}{3(s+3)}}\]
holds, where $C(\Gamma)$ depends on $S$ and $\|f\|_{C^{\frac{1}{2}(s+1)(s+2)-1}}$.
Furthermore, they prove the inequality $|\Gamma\cap (\frac{1}{N}\ZZ)^2|\le c(\Gamma,\epsilon) N^{\frac{1}{2}+\epsilon}$ for any $C^\infty$ strictly convex curve $\Gamma$.

Their proof employs a curve-lifting method, implicitly considering a certain lift of the planar curve into a higher-dimensional space, which we review later.
The estimate of lattice points on a planar curve is derived from the inequality
\[|\Gamma\cap (\tfrac{1}{N_1}\ZZ\times\cdots\times\tfrac{1}{N_n}\ZZ)|\le C(\Gamma) (N_1\cdots N_n)^{\frac{2}{n(n+1)}}\]
for any $C^n$ curve $\Gamma\subset\RR^n$ with uniformly bounded intersections with hyperplanes.

The purpose of this paper is to establish a variant of Bombieri-Pila's inequality using the recently developed theory of $\ell^2$ decoupling, especially the sharp $\ell^2$ decoupling inequality for the moment curve in \cite{BDGuth}.
With this approach, we extend the estimate from lattices to arbitrary finite sets.
\subsection{Main theorems}
To state our main results, we introduce necessary terminology.
Consider a compact $C^{n,\alpha}$ curve $\Gamma$ in $\RR^n$, parameterized by
\[\gamma(t)=(\gamma_1(t),\cdots,\gamma_n(t))\]
for $t\in[0,1]$.
We say $\Gamma$ is \textit{non-degenerate} if the Wronskian
\[W(\gamma_1',\cdots,\gamma_n')(t)
=\begin{vmatrix} \gamma_1'(t)&\gamma_2'(t)&\cdots&\gamma_n'(t)\\
\gamma_1''(t)&\gamma_2''(t)&\cdots&\gamma_n''(t) \\
\vdots&\vdots&\ddots&\vdots\\
\gamma_1^{(n)}(t)&\gamma_2^{(n)}(t)&\cdots&\gamma_n^{(n)}(t)
\end{vmatrix}\]
is nonvanishing.
For a finite set $A$, we define its doubling constant $K$ by $K=\frac{|A+A|}{|A|}$.

Our main theorem is as follows.
\begin{theorem}\label{thm:main}
Let $\Gamma$ be a $C^{n,\alpha}$ non-degenerate curve inside $\RR^n$ for positive $\alpha$ with $W(\Gamma)(t)>c_0>0$ for all $t\in [0,1]$.
Let $A$ be a finite subset of $\RR^n$ with doubling constant $K$ and minimal separation $s$.
Then we have
\[|\N_{s^n}(\Gamma)\cap A|\lesssim s^{-\epsilon}K\cdot |A|^{\frac{2}{n(n+1)}}\]
where $\N_{\delta}(\Gamma)$ denotes the $\delta$-neighborhood of $\Gamma$.
Here the implicit constant depends on $\epsilon$, $\|\gamma\|_{C^{n,\alpha}}$ and $c_0$.
\end{theorem}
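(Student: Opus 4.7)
The plan is to convert the counting problem into an $L^p$ estimate on an exponential sum and then apply the sharp $\ell^2$ decoupling inequality for non-degenerate curves of Bourgain-Demeter-Guth in tandem with an additive-combinatorial lower bound driven by the doubling constant $K$. Let $B = A \cap \N_{s^n}(\Gamma)$ and $M = |B|$. First I would fix a Schwartz bump $\eta$ with $|\eta| \sim 1$ on the ball $B_R$ of radius $R = s^{-n}$ centered at the origin and $\hat{\eta}$ supported in a ball of radius $cs^n$ for a small constant $c$. The key object is
\[ f(x) = \eta(x) \sum_{b \in B} e^{2\pi i b \cdot x}, \]
whose Fourier transform lives in $\N_{O(s^n)}(\Gamma)$. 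In the canonical decomposition of $\N_{s^n}(\Gamma)$ used in $\ell^2$ decoupling the caps have parameter length $s$ and anisotropic dimensions of order $s \times s^2 \times \cdots \times s^n$; since $A$ is $s$-separated each cap meets $B$ in $O(1)$ points, so the cap-wise decomposition $f = \sum_\theta f_\theta$ has each nonzero piece satisfying $\|f_\theta\|_{L^p} \lesssim \|\eta\|_{L^p} \sim R^{n/p}$.

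Applying $\ell^2$ decoupling at scale $\delta = s^n$ with $p = n(n+1)$ gives the upper bound
\[ \|f\|_{L^{n(n+1)}}^{n(n+1)} \lesssim s^{-\epsilon} M^{n(n+1)/2} R^n. \]
For the matching lower bound I would expand the $L^{2k}$-norm for $k = n(n+1)/2$ directly as
\[ \|f\|_{2k}^{2k} = \sum_{\mathbf{b},\mathbf{b}' \in B^{k}} \widehat{|\eta|^{2k}}\!\left(\sum_i b_i - \sum_i b_i'\right) \]
and observe that $\widehat{|\eta|^{2k}}$ is essentially a bump of height $R^n$ and width $s^n$, so this sum is of order $R^n$ times the count of tuples with $|\sum_i b_i - \sum_i b_i'| \lesssim s^n$. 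Keeping only the exact additive energy $E_k(B)$, Cauchy-Schwarz in the form $E_k(B) \geq M^{2k}/|kB|$ together with the Pl\"unnecke-Ruzsa bound $|kB| \leq |kA| \leq K^k |A|$ produces
\[ \|f\|_{2k}^{2k} \gtrsim R^n \cdot \frac{M^{2k}}{K^k |A|}. \]
Combining the two bounds yields $M^{k} \lesssim s^{-\epsilon} K^k |A|$, which gives the theorem after taking $k$-th roots and relabeling $\epsilon$.

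The main obstacle I expect is ensuring that the Fourier setup is sufficiently tight. One has to choose $\eta$ so that $\hat{f}$ is supported in a clean $s^n$-neighborhood of $\Gamma$ to which the Bourgain-Demeter-Guth inequality directly applies, and verify that the rapidly decaying tails of $\widehat{|\eta|^{2k}}$ do not contaminate the approximate additive energy calculation; both are standard but require careful bookkeeping. A secondary nuisance is that Pl\"unnecke-Ruzsa gives $|kA| \leq C_k K^k |A|$ with an implicit constant depending on $k$, but since $k = n(n+1)/2$ is fixed once $n$ is, this loss is harmless and can be absorbed into the overall implicit constant.
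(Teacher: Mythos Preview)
Your proposal is correct and follows essentially the same route as the paper: both arguments combine the decoupling upper bound $\EE_{m}(B)\lesssim s^{-\epsilon}|B|^{m}$ for $m=\tfrac{n(n+1)}{2}$ with the Cauchy--Schwarz/Pl\"unnecke lower bound $\EE_{m}(B)\ge |B|^{2m}/(K^{m}|A|)$ and solve for $|B|$. The only difference is cosmetic: the paper quotes the additive-energy bound as a known corollary of Bourgain--Demeter--Guth, whereas you unfold that corollary explicitly via the weighted exponential sum $f=\eta\sum_{b\in B}e^{2\pi i b\cdot x}$, the $O(1)$-points-per-cap observation, and the expansion of $\|f\|_{2k}^{2k}$ (your remark that Pl\"unnecke gives $|kA|\le K^{k}|A|$ with no extra constant is in fact the form the paper uses).
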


The theorem provides a generalization of Bombieri-Pila's inequality for a curve inside $\RR^n$ from lattice points to arbitrary finite sets.
We note that the inequality now involves the doubling constant and the minimal separation.

By combining Theorem \ref{thm:main} with the curve-lifting method, we establish the following estimate of lattice points on a planar curve.
See section 4 for the notation.

\begin{theorem}\label{thm:lat}
Let $\Gamma$ be a compact $C^{n,\alpha}$ planar curve.
Suppose that there is a collection $\mathcal{M}=\{m_1,\cdots,m_n\}$ of distinct monomials including $x$ and $y$ such that $W^{\mathcal{M}}(\Gamma)(t)>c_0>0$ for all $t\in [0,1]$.
Then we have
\[|\N_{d/N^{n}}(\Gamma)\cap (\tfrac{1}{N}\ZZ)^2|\lesssim N^{e(\mathcal{M})+\epsilon}\]
for each $\epsilon>0$, where the exponent $e(\mathcal{M})$ is given by 
\[e(\mathcal{M})=\frac{2}{n(n+1)}\sum_{i=1}^n \deg m_i.\]
Moreover the implicit constant depends on $\epsilon$, $\|\gamma\|_{C^{n,\alpha}}$, $c_0$ and $d$.  
\end{theorem}

\vspace{6pt}

The paper is structured as follows.
In Section 2 we review the theory of $\ell^2$ decoupling, especially the sharp $\ell^2$ decoupling for the moment curve.
Then we derive the $\ell^2$ decoupling inequality for non-degenerate curves and combined it with additive combinatorics to prove Theorem \ref{thm:main}.
Section 3 reviews the property of curve-lifting, implicitly used in Bombieri and Pila's work \cite{BP}.
Section 4 establishes Theorem \ref{thm:lat}, providing the estimate of lattice points in the neighborhood of a planar curve, by combining Theorem \ref{thm:main} and the curve-lifting method.
Additionally, we prove that the non-degeneracy of a curve implies the finiteness condition about intersections with hyperplanes used in \cite{BP}.

\subsection*{Notation}
Throughout the paper, we write $A\lesssim B$ to denote $A\le CB$ for an implicit constant which depends on parameters like $\epsilon$.
\vspace{12pt}

\section{$\ell^2$ decoupling for non-degenerate curves and application to lattice points}
In this section, we first review the theory of $\ell^2$ decoupling and derive the $\ell^2$ decoupling inequality for a non-degenerate $C^{n,\alpha}$ curve inside $\RR^n$, building upon the results for the moment curve established in \cite{BDGuth}.
The subsequent subsection is devoted to proving Theorem \ref{thm:main} using the $\ell^2$ decoupling inequality.
\subsection{$\ell^2$ decoupling for a non-degenerate curve}
For a function $f:\RR^n\to\mathbb{C}$ and a subset $R\subset \RR^n$, the Fourier restriction $f_R$ is defined by
\[f_R(x)=\int_R\hat{f}(\xi)e^{2\pi i x\cdot\xi}d\xi.\]
Consider a subset $\Omega$ of $\RR^n$ partitioned into disjoint subsets $\theta$.
Any function $f$ with $\hat{f}$ supported on $\Omega$ can be decomposed into its Fourier restrictions as $f=\sum_\theta f_\theta$.
The $\ell^2$ decoupling inequality examines the relationship between the $L^p$ norm of $f$ and the $L^p$ norms of $f_\theta$.
\begin{definition}
For a subset $\Omega$ of $\RR^n$ partitioned into disjoint subsets $\theta$, and for every exponent $p$, the \textit{decoupling constant} $D_p(\Omega =\sqcup \theta)$ is defined as the smallest constant such that, for every function $f$ with $\hat{f}$ supported on $\Omega$, the following holds:
\[\|f\|_{L^p(\RR^n)}\le D_p(\Omega =\sqcup \theta)\left(\sum_{\theta}\|f_{\theta}\|_{L^p(\RR^n)}^2\right)^{\frac{1}{2}}.\]
\end{definition}

We begin by reviewing the $\ell^2$ decoupling inequality for the moment curve, a key element in our approach.
Let $M$ be the moment curve in $\RR^n$ parameterized by $(t,t^2,\cdots,t^n)$ for $t\in[0,1]$, and $\N_{\delta}(M)$ be the $\delta$-neighborhood of the moment curve $M$.
The partition of $\N_{\delta}(M)$ into disjoint subsets $\theta$, each containing the $\delta$-neighborhood of an arc of $M$ of length $\delta^{\frac{1}{n}}$, defines the decoupling constant $D_{p}(\delta)$ as the smallest constant such that
\[\|f\|_{L^p(\RR^n)}\le D_{p}(\delta)\left(\sum_{\theta}\|f_{\theta}\|_{L^p(\omega_B)}^2\right)^{\frac{1}{2}}\]
holds for all functions $f$ with $\hat{f}$ supported in $\N_{\delta}(M)$.
For a precise construction of the partition $\N_\delta(\Gamma)=\sqcup\theta$ we refer to \cite{GLYZ}.

Bourgain, Demeter and Guth established the following inequality for the decoupling constant $D_{p}(\delta)$ at the critical value $p=n(n+1)$, with the original proof in \cite{BDGuth} and a concise proof in \cite{GLYZ}.
\begin{theorem}\label{thm:bdg}
For every $\delta\in(0,1)$, we have $D_{n(n+1)}(\delta) \lesssim \delta^{-\epsilon}$.
\end{theorem}

Now consider a non-degenerate $C^{n,\alpha}$ curve $\Gamma$ inside $\RR^n$.
In this case, we similarly define the decoupling constant $D_{\Gamma,p}(\delta)$ for $\Gamma$.
Let $\N_{\delta}(\Gamma)$ be the $\delta$-neighborhood of $\Gamma$ and consider its partition into its disjoint subsets $\theta$ each containing the $\delta$-neighborhood of an arc of length $\delta^{\frac{1}{n}}$.
Then we define $D_{\Gamma,p}(\delta)$ to be the decoupling constant with respect to the partition $\N_{\delta}(\Gamma)=\sqcup \theta$.
The following theorem establishes that the decoupling inequality extends to $D_{\Gamma,p}(\delta)$ (see \cite{GLYZ} for the proof when the curve is $C^{n+1}$).
For the proof, we use Pramanik-Seeger's argument in \cite{PS} that allows us to generalize the $\ell^2$ decoupling inequality from a curve to its small perturbation.
\begin{theorem}\label{thm:dc}
Let $\Gamma$ be a non-degenerate $C^{n,\alpha}$ curve lying inside $\RR^n$ such that $W(\gamma_1',\cdots,\gamma_n')(t)> c_0>0$ throughout $[0,1]$.
Then we have
\[D_{\Gamma,n(n+1)}(\delta)\lesssim \delta^{-\epsilon}\]
where the implicit constant depends on $\epsilon$, $\|\gamma\|_{C^{n,\alpha}}$ and $c_0$.
\end{theorem}

\begin{proof}
Let $p=n(n+1)$.
%As before, consider the partition of $\N_\delta(C)$ into its disjoint subsets $\theta$ which are $\delta$-neighborhoods of arcs of length $\delta^{\frac{1}{n}}$.
For a given $t_0\in[0,1]$, consider the $n$-th Taylor series $r(t)$ of $\gamma(t)$ at $t_0$.
The $C^{n,\alpha}$ condition implies $|\gamma(t_0+\Delta t)-r(t_0+\Delta t)|\lesssim |\Delta t|^{n+\alpha}<\delta$ for $|\Delta t|\lesssim \delta^{\frac{1}{n+\alpha}}$, where the implicit constant only depends on $\|\gamma\|_{C^{n,\alpha}}$.
Let $R$ be the curve parameterized by $r(t)$ and denote by $\tau$ the $\delta$-neighborhood of an arc of $\Gamma$ of length $\delta^{\frac{1}{n+\alpha}}$.

The condition $W(\gamma_1',\cdots,\gamma_n')(t)>c_0>0$ implies that $R$ can be transformed into to the moment curve $M$ under an invertible linear change of variables.
Therefore, we obtain the inequality
\begin{align*}
\|f\|_{L^p}^2
&\le D_{\Gamma,p}^2(\delta^{\frac{n}{n+\alpha}})\cdot\sum_{\tau}\|f_{\tau}\|_{L^p}^2\\
&\lesssim  \delta^{-\epsilon}D_{\Gamma,p}^2(\delta^{\frac{n}{n+\alpha}}) \cdot\sum_{\tau}\sum_{\theta\subset\tau}\|f_{\theta}\|_{L^p}^2
\end{align*}
where we used Theorem \ref{thm:bdg}.
By the definition of the decoupling constant we have
\begin{align*}
D_{\Gamma,p}^2(\delta)\lesssim \delta^{-\epsilon}D_{\Gamma,p}^2(\delta^{\frac{n}{n+\alpha}}).
\end{align*}
Through iteration, we conclude that $D_{\Gamma,p}(\delta)\lesssim \delta^{-\epsilon}$.
\end{proof}

\subsection{Proof of the main theorem}
For a finite set $A$, the \textit{additive $m$-energy} of $A$ is defined by 
\[\EE_m(A)=|\{(a_1,\cdots,a_{2m})\in A^{2m}:a_1+\cdots+a_m=a_{m+1}+\cdots+a_{2m}\}|.\]
Recall that the doubling constant $K$ of $A$ is defined by $K=\frac{|A+A|}{|A|}$.
For the proof of the main theorem, we will need the following results from additive combinatoics.
\begin{lemma}\label{lem:aux}
For a finite subset $B$ of an abelian group, we have
\[\EE_m(B)\ge\frac{|B|^{2m}}{|mB|}\]
where $mB=\underbrace{B+\cdots+B}_{m \textit{ times}}$.
\end{lemma}
\begin{proof}
With the notation $\varphi(b)=|\{(b_1,\cdots,b_m)\in B^m:b_1+\cdots+b_m=b\}|$, the additive $m$-energy of $B$ can be expressed as follows
\[\EE_m(B) = \sum_{b\in mB}\varphi(b)^2.\]
Now the application of the Cauchy-Schwartz inequality implies
\[\EE_m(B) =\sum_{b\in mB}\varphi(b)^2\ge \frac{1}{|mB|}\left(\sum_{b\in mB}\varphi(b)\right)^2=\frac{|B|^{2m}}{|mB|}.\]
\end{proof}
\begin{lemma}\label{lem:db}
Let $B$ be a finite subset of an abelian group and let $A$ be another finite set with doubling constant $K$ containing $B$.
Then
\[\EE_m(B)\ge\frac{1}{K^m}\frac{|B|^{2m}}{|A|}.\]
\end{lemma}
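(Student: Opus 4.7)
The plan is to combine a Cauchy--Schwarz lower bound on the $m$-energy with the Pl\"unnecke--Ruzsa sumset inequality. For each $s$ in the $m$-fold sumset $mB := \underbrace{B+\cdots+B}_{m}$, let
\[r_m(s)=|\{(a_1,\ldots,a_m)\in B^m\mid a_1+\cdots+a_m=s\}|.\]
Then, unpacking the definition, one has $\EE_m(B)=\sum_s r_m(s)^2$ and $\sum_s r_m(s)=|B|^m$. Applying Cauchy--Schwarz to the second identity with the support of $r_m$ (which is contained in $mB$) gives
\[\EE_m(B)\ge\frac{(\sum_s r_m(s))^2}{|mB|}=\frac{|B|^{2m}}{|mB|}.\]

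Next I would bound $|mB|$ from above in terms of $|A|$ and $K$. Since $B\subseteq A$, clearly $mB\subseteq mA$ so $|mB|\le |mA|$. The Pl\"unnecke--Ruzsa inequality, applied to $A$ with doubling constant $K=|A+A|/|A|$, then gives
\[|mA|\le K^m|A|.\]
Combining the two inequalities yields $\EE_m(B)\ge |B|^{2m}/(K^m|A|)$, which is the desired bound.

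There is no real obstacle here: the only nontrivial input is the Pl\"unnecke--Ruzsa iterated sumset bound, which is standard. The slightly subtle feature worth highlighting is that the Pl\"unnecke--Ruzsa estimate is invoked for the ambient set $A$, not for $B$ itself (whose doubling constant could be much larger), and this is precisely why the statement of the lemma carries the factor $|A|$ in the denominator rather than $|B|$.
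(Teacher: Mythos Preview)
Your proof is correct and follows essentially the same route as the paper: the Cauchy--Schwarz lower bound $\EE_m(B)\ge |B|^{2m}/|mB|$, followed by $|mB|\le |mA|$ and the Pl\"unnecke--Ruzsa bound $|mA|\le K^m|A|$. Your additional remark clarifying why Pl\"unnecke--Ruzsa is applied to $A$ rather than $B$ is a nice touch.
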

\begin{proof}
Pl\"{u}nnecke's inequality asserts that $|mA|\le K^m|A|$ for a finite subset $A$ with doubling constant $K$.
The statement follows from Lemma \ref{lem:aux} by noting $|mB|\le|mA|$.
\end{proof}

\begin{proof}[Proof of Theorem \ref{thm:main}]
Let $B=N_{s^n}(\Gamma)\cap A$.
From Theorem \ref{thm:dc}, $D_{\Gamma,n(n+1)}(\delta)\lesssim \delta^{-\epsilon}$.
As a corollary of the $\ell^2$ decoupling inequality, we have\[\EE_m(B)\lesssim s^{-\epsilon} |B|^m\]
where $m=\frac{n(n+1)}{2}$ (see \cite[Section 4]{BDGuth} or \cite[Theorem 2.18]{BD}).
Note that this is sharp up to $s^{-\epsilon}$ by the obvious lower bound $\EE_m(B)\ge |B|^m$.
On the other hand, Lemma \ref{lem:db} provides a lower bound for the additive $m$-energy
\[\frac{1}{K^m}\frac{|B|^{2m}}{|A|}\le \EE_m(B).\]
Combining these two inequalities implies the desired inequality
\[|B|\lesssim s^{-\epsilon}K|A|^{1/m}.\]
\end{proof}
\subsection{Special case: generalized arithmetic progressions}
When the doubling constant is uniformly bounded, Theorem \ref{thm:main} yields an inequality without reference to doubling constant.
An example of such finite sets is provided by Generalized Arithmetic Progressions (GAP).

\begin{definition}
A \textit{Generalized Arithmetic Progression} (GAP) of dimension $m$ is a finite set of the form
\[L= \{\boldsymbol{v}+\sum_{i=1}^m\ell_i\boldsymbol{v}_i:1\le \ell_i\le N_i \text{ for all $1\le i\le m$}\}\]
where $\boldsymbol{v},\boldsymbol{v}_1,\cdots,\boldsymbol{v}_m\in \RR^n$ and $N_1,\cdots,N_m\in\NN$.
It is called \textit{proper} if its size precisely equals $N_1\cdots N_m$.
\end{definition}
For instance, a lattice gives rise to a proper finite GAP when restricted to a finite region.
By definition, any proper finite GAP has a doubling constant at most $2^m$ regardless of its size.
As a corollary of Theorem \ref{thm:main}, we derive the following result.

\begin{corollary}
Consider a non-degenerate $C^{n,\alpha}$ curve $\Gamma$ inside $\RR^n$ with positive $\alpha$.
Let $L$ be a proper finite GAP, and let $s$ be its minimal separation.
Then for any $\epsilon>0$, we have
\[|\N_{s^n}(\Gamma)\cap L|\lesssim s^{-\epsilon}|L|^{\frac{2}{n(n+1)}}\]
where the implicit constant depends on $\epsilon$, $\|\gamma\|_{C^{n,\alpha}}$ and $c_0$.
\end{corollary}

The corollary, when applied to the special case of a lattice $L$, recovers Bombieri-Pila's inequality as discussed in the Introduction.
\section{Curve-lifting method}
Bombieri and Pila established their estimates, implicitly dealing with a speficit lift of a planar curve into a higher dimensional space.
In this section, we revisit this method and leverage it to derive lattice point estimates on a planar curve.
\subsection{Lift of a planar curve}

\begin{definition}
Let $\Gamma$ be a curve parameterized by $\gamma(t)=(\gamma_1(t),\gamma_2(t))$ for $t\in[0,1]$, and let $\mathcal{M}=\{m_1,\cdots,m_n\}$ be a collection of distinct monomials of two variables.
The \textit{lift} $\Gamma^{\mathcal{M}}$ associated with $\mathcal{M}$ is defined as a curve inside $\RR^n$ parameterized by 
\[\widetilde{\gamma}(t)=(m_1(t),\cdots,m_n(t))\]
for $t\in [0,1]$, where $m_i(t) = m_i(\gamma_1(t),\gamma_2(t))$ for each $i$.
\end{definition}
For instance, a lift of $\Gamma$ with respect to monomials $\{x,y,x^2,xy,y^2\}$ is a curve inside $\RR^5$ parameterized by $\widetilde{\gamma}(t)=(\gamma_1(t),\gamma_2(t),\gamma_1(t)^2,\gamma_1(t)\gamma_2(t),\gamma_2(t)^2)$.

We summarize the main properties of curve-lifting needed later.
\begin{proposition}\label{prop:lift}
Let $\Gamma$ be a curve parameterized by $\gamma(t)=(\gamma_1(t),\gamma_2(t))$ for $t\in[0,1]$, and let $\mathcal{M}=\{m_1,\cdots,m_n\}$ be a collection of distinct monomials of two variables such that $m_1=x$ and $m_2=y$.
\begin{enumerate}
    \item There is a bijection
    \[\Gamma\cap (\tfrac{1}{N}\ZZ)^2 \longleftrightarrow \Gamma^{\mathcal{M}}\cap (\tfrac{1}{N^{d_1}}\ZZ\times\cdots\times \tfrac{1}{N^{d_n}}\ZZ)\]
    where $d_i$ is the degree of $m_i$.
    %\item For any $t,t'\in [0,1]$, we have $|\gamma(t)-\gamma(t')|\ge |\tilg(t)-\tilg(t')|$.
    \item For real numbers $c_0,c_1,\cdots,c_n$, let $C$ be the planar algebraic curve defined by $\sum_{i=1}^n c_im_i(x,y)=c_0$ and $H$ be the hyperplane inside $\RR^n$ defined by $\sum_{i=1}^nc_ix_i=c_0$.
    Then there is a bijection
    \[\Gamma\cap C\longleftrightarrow\Gamma^{\mathcal{M}}\cap H.\]
\end{enumerate}
\end{proposition}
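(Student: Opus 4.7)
The plan is to prove both parts directly from the definitions by exhibiting the explicit bijection, namely the lift map $\gamma(t)\mapsto\widetilde{\gamma}(t)$ together with its inverse given by projection onto the first two coordinates. The key preliminary observation is that, because $m_1=x$ and $m_2=y$, the first two coordinates of $\widetilde{\gamma}(t)$ are exactly $(\gamma_1(t),\gamma_2(t))$. Consequently the projection $\pi:\RR^n\to\RR^2$ onto the first two coordinates sends $\Gamma^{\mathcal{M}}$ bijectively onto $\Gamma$, with inverse given by the lift. All of the remaining work is to check that this bijection respects the relevant lattice / algebraic conditions.

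For part (1), I would start from a point $p=\gamma(t)\in\Gamma\cap(\tfrac{1}{N}\ZZ)^2$, write $p=(a/N,b/N)$ with $a,b\in\ZZ$, and compute the image coordinate by coordinate: if $m_i(x,y)=x^{j_i}y^{k_i}$ with $d_i=j_i+k_i$, then
\[
m_i(p)=\frac{a^{j_i}b^{k_i}}{N^{d_i}}\in\tfrac{1}{N^{d_i}}\ZZ,
\]
so $\widetilde{\gamma}(t)$ lies in the claimed product lattice. Conversely, if $\widetilde{\gamma}(t)$ lies in that product lattice, then reading off the first two coordinates (using $m_1=x$, $d_1=1$ and $m_2=y$, $d_2=1$) forces $\gamma(t)\in(\tfrac{1}{N}\ZZ)^2$. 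Together with the injectivity noted above, this yields the bijection.

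For part (2), the identity to exploit is simply
\[
\sum_{i=1}^n c_i\,\widetilde{\gamma}_i(t)=\sum_{i=1}^n c_i\,m_i(\gamma_1(t),\gamma_2(t)),
\]
which holds by the very definition of $\widetilde{\gamma}$. Therefore $\widetilde{\gamma}(t)\in H$ if and only if the right-hand side equals $c_0$, which is the defining equation of $C$ evaluated at $\gamma(t)$. Hence the lift restricts to a bijection between $\Gamma\cap C$ and $\Gamma^{\mathcal{M}}\cap H$, with the inverse again given by projection onto the first two coordinates.

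There is essentially no hard step here: the proposition is bookkeeping that turns a planar problem into one for the lifted curve. The only point requiring any care is making sure the projection back from $\RR^n$ to $\RR^2$ really inverts the lift, and this is where the hypothesis $m_1=x$, $m_2=y$ is used. Once that observation is in place, both bijections are immediate consequences of the degree-of-monomial computation in part (1) and the linearity of the hyperplane equation in part (2).
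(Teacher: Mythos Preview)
Your proposal is correct and follows exactly the same approach as the paper's proof, which simply notes that the lift carries $\tfrac{1}{N}$-integer points to points whose $i$-th coordinate has denominator $N^{d_i}$, and that both sets in part (2) correspond to the same parameter set $\{t:\sum c_i m_i(\gamma_1(t),\gamma_2(t))=c_0\}$. You have merely spelled out the details (the explicit inverse via projection and the role of $m_1=x,\ m_2=y$) that the paper leaves implicit.
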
\label{prop:basic}

\begin{proof}
Each statement can be checked in a straightforward way as follows.
\begin{enumerate}
    \item It is sufficient to note that for a $\frac{1}{N}$-integer point in $\RR^2$ the corresponding point under the lift has the $i$-th component with denominator divisible by $N^{d_i}$.
    %\item It is clear that $|\tilg(t)-\tilg(t')|^2=|\gamma(t)-\gamma(t')|^2+\sum_{i=3}^n (m_i(t)-m_i(t'))^2$ where $m_i(t)=m_i(\gamma_1(t),\gamma_2(t))$.
    \item In terms of the parameter $t$ of a curve, both sets correspond to $t\in[0,1]$ satisfying $\sum_{i=1}^n c_im_i(\gamma_1(t),\gamma_2(t))=c_0$.
\end{enumerate}
\end{proof}
For a point $p=(x_0,y_o)\in\RR^2$, we will write
\[p^{\mathcal{M}}=(m_1(x_0,y_0),\cdots,m_n(x_0,y_0)).\]
\begin{proposition}\label{prop:ineq}
For points $p,q$ inside a bounded square $[-R,R]^2$, we have 
\[|p^{\mathcal{M}}-q^{\mathcal{M}}|_{\RR^n}\le C(\mathcal{M})|p-q|_{\RR^2}\]
where $C(\mathcal{M})$ only depends on $R$ and the maximal degree of monomials $m_1,\cdots,m_n$.    
\end{proposition}

\begin{proof}
Let $p=(x_1,y_1)$, $q=(x_2,y_2)$ and $m_i=x^{a_i}y^{b_i}$.
    Then we clearly have
    \[|p^{\mathcal{M}}-q^{\mathcal{M}}|_{\RR^n}^2=\sum_{i=1}^n |x_1^{a_i}y_1^{b_i}-x_2^{a_i}y_2^{b_i}|^2.\]
    The triangle inequality implies
    \begin{align*}
        |x_1^{a_i}y_1^{b_i}-x_2^{a_i}y_2^{b_i}|
        &\le |x_1^{a_i}y_1^{b_i}-x_1^{a_i}y_2^{b_i}| + |x_1^{a_i}y_2^{b_i}-x_2^{a_i}y_2^{b_i}|\\
        &=|x_1|^{a_i}|y_1^{b_i}-y_2^{b_i}|+|y_2|^{b_i}|x_1^{a_i}-x_2^{a_i}|.
    \end{align*}
    The simple observation 
    \begin{align*}|X^m-Y^m|&=|X-Y|\cdot|X^{m-1}+\cdots+Y^{m-1}|\\
    &\le m\cdot |X-Y|\cdot(\max\{|X|,|Y|\})^{m-1}\end{align*}
    for real numbers $X,Y$ now implies
    \begin{align*}
        |p^{\mathcal{M}}-q^{\mathcal{M}}|_{\RR^n}^2
        &\le \sum_{i=1}^n\left(b_i R^{a_i+b_i-1}|y_1-y_2|+a_i R^{a_i+b_i-1}|x_1-x_2|\right)^2\\
        &\le \sum_{i=1}^n ((a_i+b_i) R^{a_i+b_i-1}|p-q|_{\RR^2})^2\\
        &\le |p-q|_{\RR^2}^2\sum_{i=1}^n \deg(m_i)^2 R^{2\deg(m_i)-2}.
    \end{align*}
\end{proof}
\subsection{Bombieri-Pila's estimate of lattice points on a planar curve}
In this section, following the approach in \cite{BP}, we present a more general estimate of lattice points on a planar curve by combining the curve-lifting method and determinant method.
\begin{proposition}\label{prop:BP}
Let $\Gamma$ be a compact $C^n$ planar curve, and let $\mathcal{M}=\{m_1,\cdots,m_n\}$ be a collection of $n$ distinct monomials about two variables such that $m_1=x,m_2=y$.
Suppose that there exists a constant $S$ such that $\Gamma$ has at most $S$ intersection points with any algebraic curve of the form $c_1m_1(x,y)+\cdots+c_nm_n(x,y)=c_0$.
Then we have
\[|\Gamma\cap (\tfrac{1}{N}\ZZ)^2|\le C(\Gamma, C)N^{e(\mathcal{M})}\]
where the exponent $e(\mathcal{M})$ is given by
\[e(\mathcal{M})=\frac{2}{n(n+1)}\sum_{i=1}^n \deg m_i.\]
\end{proposition}

\begin{remark}
The assumption in Proposition \ref{prop:BP} is weaker than the one in Theorem \ref{thm:lat}, where the latter allows lattice points to lie in the neighborhood of a curve.
We will later establish as Proposition \ref{prop:nd} that the non-degeneracy implies the finiteness condition regarding intersections with arbitrary hyperplanes.
Huxley achieved a similar upper bound in \cite{H}, using the determinant method.
\end{remark}

\begin{proof}
Let $\Gamma^{\mathcal{M}}$ be the lift of $\Gamma$ with respect to $\mathcal{M}=\{m_1,\cdots,m_n\}$.
We first note that Proposition \ref{prop:lift} implies that
\[|\Gamma\cap (\tfrac{1}{N}\ZZ)^2| = |\Gamma^{\mathcal{M}}\cap (\tfrac{1}{N^{d_1}}\ZZ\times\cdots\times \tfrac{1}{N^{d_n}}\ZZ)|\]
By Proposition \ref{prop:lift} (2), the assumption implies that the lift $\Gamma^{\mathcal{M}}$ has finite intersection with any hyperplane inside $\RR^n$.
Therefore, we can apply the determinant method from \cite{BP} to obtain the inequality
\[|\Gamma^{\mathcal{M}}\cap (\tfrac{1}{N^{d_1}}\ZZ\times\cdots\times \tfrac{1}{N^{d_n}}\ZZ)|\le C(\Gamma^{\mathcal{M}}, C)(N^{d_1}\cdots N^{d_n})^{\frac{2}{n(n+1)}}.\]
\end{proof}

\begin{remark}\label{rmk}
Proposition \ref{prop:BP} suggests a certain finiteness condition for a planar curve with smoothness $C^n$.
Express $n$ in the form $n=2+3+\cdots+s+\Delta n$ for some $s$ where $0\le \Delta n\le s$.
We note a collection $\mathcal{M}$ of $n$ distinct monomials has minimal total degree when it consists of $x^iy^j$, for all pairs $(i,j)$ such that $1\le i+j\le s$, and $\Delta n$ distinct monomials $m_1^{(s)},\cdots,m_{\Delta n}^{(s)}$ of degree $s+1$.
We can then consider the following condition on a planar curve:
\begin{enumerate}
    \item[(*)] A planar curve $\Gamma$ has uniformly bounded intersection with any algebraic curve involving the monomials in $\mathcal{M}$
\end{enumerate}
By Proposition \ref{prop:BP}, the condition (*) implies the following inequality
\[|\Gamma\cap (\tfrac{1}{N})\ZZ)^2|\le C(\Gamma) N^{e(\mathcal{M})}\]
where the exponent is given by
\[e(\mathcal{M}) = \frac{2}{n(n+1)}\left(\frac{(s-1)s(s+1)}{3} + s\cdot \Delta n\right).\]
\end{remark}

\begin{remark}
The curve-lifting method is applicable to curves in any dimension, and the same argument yields an estimate for lattice points on a curve in an arbitrary dimension.
For example, starting with a compact space curve with parameterization $\gamma(t)=(\gamma_1(t),\gamma_2(t),\gamma_3(t))$, we can lift it to a curve inside $\RR^9$ by using monomials $\{x,y,z,x^2,xy,xz,y^2,yz,z^2\}$.
\end{remark}
\section{Lattice points on a planar curve}
\subsection{Proof of Theorem \ref{thm:lat}}
Let $\Gamma$ be a $C^n$ planar curve parameterized by $\Gamma(t)=(\gamma_1(t),\gamma_2(t))$ for $t\in[0,1]$.
For each collection of distinct monomials $\mathcal{M}=\{m_1,\cdots,m_n\}$, we define the function $W^{\mathcal{M}}(\Gamma)(t)$ as
\[W^{\mathcal{M}}(\Gamma)(t)
=\begin{vmatrix} m_1'(t)&m_2'(t)&\cdots&m_n'(t)\\
m_1''(t)&m_2''(t)&\cdots&m_n''(t) \\
\vdots&\vdots&\ddots&\vdots\\
m_1^{(n)}(t)&m_2^{(n)}(t)&\cdots&m_n^{(n)}(t)
\end{vmatrix}\]
where $m_i(t) = m_i(\gamma_1,\gamma_2)(t)$.
This is precisely the Wronskian $W(\Gamma^{\mathcal{M}})(t)$ of the lift $\Gamma^{\mathcal{M}}$ associated with $\mathcal{M}$.

\begin{proof}[Proof of Theorem \ref{thm:lat}]
The first part of Proposition \ref{prop:basic} implies that $\frac{1}{N}$-integer points correspond to points in $\frac{1}{N^{d_1}}\ZZ\times\cdots\times\frac{1}{N^{d_n}}\ZZ$ under the curve-lifting procedure, where $d_i=\deg m_i$.
In particular, the image is $\frac{1}{N}$-separated.
Moreover, Proposition \ref{prop:ineq} ensures that there exists a constant $c$, depending only on the maximal degree and the length of $\Gamma$, such that the corresponding points lie in the $\frac{cd}{N^n}$-neighborhood of the lift $\Gamma^{\mathcal{M}}$.
Therefore, we have the following inequality
\[|\N_{d/N^{n}}(\Gamma)\cap (\tfrac{1}{N}\ZZ)^2|\le |\N_{cd/N^{n}}(\Gamma^{\mathcal{M}})\cap (\tfrac{1}{N^{d_1}}\ZZ\times\cdots\times\tfrac{1}{N^{d_n}}\ZZ)|.\]
By the assumption, the lift $\Gamma^{\mathcal{M}}$ is a non-degenerate $C^{n,\alpha}$ curve inside $\RR^n$.
Therefore, we can apply Theorem \ref{thm:main} to the lattice $\tfrac{1}{N^{d_1}}\ZZ\times\cdots\times\tfrac{1}{N^{d_n}}\ZZ$ restricted to a finite region.
\end{proof}

When $n$ is of the form $n=\frac{1}{2}(s+1)(s+2)-1$, we may consider the collection of all monomials of degree at most $s$, namely $\mathcal{M}_s=\{x^iy^j:1\le i+j\le s\}$.
In this case Theorem \ref{thm:lat} has a simple exponent, and this essentially corresponds to the inequality considered in \cite{BP}.
\begin{corollary}
Let $n$ be an integer of the form $\frac{1}{2}(s+1)(s+2)-1$.
Suppose that $\Gamma$ is a compact $C^{n,\alpha}$ planar curve such that $W^{\mathcal{M}_s}(\Gamma)$ is nonvanishing.
Then we have
\[|\N_{d/N^{n}}(\Gamma)\cap (\tfrac{1}{N}\ZZ)^2|\lesssim N^{\frac{8}{3(s+3)}+\epsilon}.\]
\end{corollary}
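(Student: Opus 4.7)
The plan is to deduce the corollary by directly applying Theorem \ref{thm:lat} to the specific collection $\mathcal{M} = \mathcal{M}_s = \{x^iy^j : 1 \le i+j \le s\}$, and then carrying out the combinatorial identity that reduces the resulting exponent $e(\mathcal{M}_s)$ to the claimed value $\tfrac{8}{3(s+3)}$.

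First I would verify that the hypotheses of Theorem \ref{thm:lat} are met. The collection $\mathcal{M}_s$ contains $x$ and $y$ (the monomials of degree $1$), its cardinality is $|\mathcal{M}_s| = \binom{s+2}{2} - 1 = \tfrac{(s+1)(s+2)}{2} - 1 = n$, and by assumption $W^{\mathcal{M}_s}(\Gamma)$ is nonvanishing on the compact interval $[0,1]$, which gives a uniform positive lower bound $c_0$. Since $\Gamma$ is $C^{n,\alpha}$, Theorem \ref{thm:lat} applies and produces the bound
\[|\N_{d/N^{n}}(\Gamma)\cap (\tfrac{1}{N}\ZZ)^2|\lesssim N^{e(\mathcal{M}_s)+\epsilon}.\]

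The remaining task is purely arithmetic: to show that $e(\mathcal{M}_s) = \tfrac{8}{3(s+3)}$. Grouping monomials by degree, there are exactly $k+1$ monomials of degree $k$ in $\mathcal{M}_s$ for each $1 \le k \le s$, so
\[\sum_{m \in \mathcal{M}_s}\deg m = \sum_{k=1}^{s} k(k+1) = \frac{s(s+1)(s+2)}{3}.\]
Using the factorizations $n = \tfrac{s(s+3)}{2}$ and $n+1 = \tfrac{(s+1)(s+2)}{2}$, I get $n(n+1) = \tfrac{s(s+1)(s+2)(s+3)}{4}$, and therefore
\[e(\mathcal{M}_s) = \frac{2}{n(n+1)}\cdot\frac{s(s+1)(s+2)}{3} = \frac{8}{3(s+3)},\]
which is exactly the exponent in the corollary.

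There is no real obstacle here; the statement is essentially a specialization of Theorem \ref{thm:lat}, and the only nontrivial step is the telescoping computation of $\sum_{k=1}^s k(k+1)$ together with the factorization of $n(n+1)$ that makes the numerators cancel. The mildest point to double-check is the identification $n = \tfrac{1}{2}(s+1)(s+2) - 1$ with $|\mathcal{M}_s|$, which is what makes the lift $\Gamma^{\mathcal{M}_s}$ land in the correct ambient dimension $\RR^n$ so that Theorem \ref{thm:lat} can be invoked.
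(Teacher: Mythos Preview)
Your proposal is correct and follows exactly the intended route: the paper presents this corollary as an immediate specialization of Theorem~\ref{thm:lat} to $\mathcal{M}=\mathcal{M}_s$, and your verification of the hypotheses together with the arithmetic reduction of $e(\mathcal{M}_s)$ to $\tfrac{8}{3(s+3)}$ is precisely the computation the paper leaves implicit.
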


\begin{remark}[Relation to Schdit's conjecture]
Schmidt conjectured in \cite{Sc} that $|\Gamma\cap (\tfrac{1}{N}\ZZ)^2|\lesssim N^{1/2+\epsilon}$ holds for a planar graph under a certain condition.
The inequality has been established under stronger assumptions.
For example, Bombieri and Pila proved it for strictly convex $C^{\infty}$ curves in \cite{BP}, and consequently, Pila proved it for $C^{104}$ curves with an additional analytic condition in \cite{P}.

Proposition \ref{prop:BP} with $n=7$ implies $|\Gamma\cap (\tfrac{1}{N}\ZZ)^2|\le C(\Gamma) N^{\frac{1}{2}}$ for any $C^{7}$ curve that has uniformly bounded intersection points with any algebraic curve involving monomials in $\mathcal{M}$, where $\mathcal{M}$ is a collection of distinct monomials consisting of all monomials of degree at most 2 and two degree 3 monomials.
As we will prove in Proposition \ref{prop:nd}, this assumption is satisfied if the Wronskian $W^{\mathcal{M}}(\Gamma)(t)$ is nonvanishing.
\end{remark}

\subsection{Non-degeneracy implies the finiteness condition about intersection with hyperplanes}
The determinant method, as introduced in \cite{BP}, establishes the inequality
\[|\Gamma\cap(\tfrac{1}{N_1}\ZZ\times\cdots\times\cdots\times\tfrac{1}{N_n}\ZZ)|\le C(\Gamma) (N_1\cdots N_n)^{\frac{2}{n(n+1)}}\]
under the assumption that $\Gamma$ is $C^n$ and has uniformly bounded intersection points with hyperplanes.
Pila \cite{P} later proved that nonvanishing of certain Wronskians of the curve, including $W(\Gamma)$, implies the finiteness condition.

However, our observation reveals that the sole nonvanishing of the Wronskian $W(\Gamma)$ is sufficient to yield the same estimate of lattice points on a curve.
The non-degeneracy condition implies the finiteness condition, allowing us to apply the determinant method.
\begin{proposition}\label{prop:nd}
Let $\Gamma$ be a compact, non-degenerate $C^n$ curve inside $\RR^n$.
Then there exists a constant $N(\Gamma)$ such that any hyperplane intersects $\Gamma$ with at most $N(\Gamma)$ points.
\end{proposition}
\begin{proof}
We prove the statement by induction on $n$.
The base case for $n=2$ is clear.

Suppose that the statement holds for $n-1$, and let $\Gamma$ be a compact, non-degenerate $C^n$ curve inside $\RR^n$.
Since we can partition $\Gamma$ into finitely many graphs, we may assume that $\Gamma$ itself is a graph.
Further, we may assume that the graph $\Gamma$ is of the form $\Gamma=\{\gamma(t)=(t,f_2(t),\cdots,f_n(t)) : t\in I\}$.

Let us define a $C^{n-1}$ graph $\Gamma'$ inside $\RR^{n-1}$ using the parameterization:
\[\Gamma'=\{(f_2'(t),\cdots,f_n'(t)): t\in I\}.\]
This graph is non-degenerate since $\Gamma$ is non-degenerate. 
We make the following observation.

\vspace{3pt}
\textbf{Observation.}
\textit{If a hyperplane $H$ inside $\RR^n$ intersects the graph $\Gamma$ with $m$ points, then there exists a hyperplane $H'$ inside $\RR^{n-1}$ that intersects the graph $\Gamma'$ with at least $m-1$ points.}
\vspace{3pt}

To prove the observation, consider a hyperplane $H:a_1x_1+\cdots+a_nx_n=a_0$ inside $\RR^n$ intersecting with $\Gamma$ at points $\gamma(t_1),\cdots,\gamma(t_m)$ for $t_1,\cdots,t_m\in I$.
This implies
\[a_0+a_1t_i+a_2f_2(t_i)+\cdots+a_nf_n(t_i)=0\]
for each $i$.
By the Mean Value Theorem, there exist points $t_1',\cdots,t_{m-1}'\in I$ within the range $t_i<t_i'<t_{i+1}$ for $1\le i\le m-1$ such that
\[a_1+a_2f_2'(t_i')+\cdots+a_nf_n'(t_i')=0\]
for each $i$.
Consequently, the hyperplane $H':a_1+a_2x_2+\cdots+a_nx_n=0$ inside $\RR^{n-1}$ intersects $\Gamma'$ with at least $m-1$ points.

The proof concludes by the inductive hypothesis.
There exists a constant $N$ such that $\Gamma'$ intersects any hyperplane inside $\RR^{n-1}$ with at most $N$ points.
According to the observation, the graph $\Gamma$ meets any hyperplane inside $\RR^n$ with at most $N+1$ points.
\end{proof}

\section*{Acknowledgement}
We would like to thank Larry Guth for his suggestion to study the application of $\ell^2$ decoupling to lattice points and generously sharing his insights.
A part of the research was conducted while the author was a participant of the Summer Program in Undergraduate Research (SPUR) of the MIT Mathematics Department.
We would like to thank David Jerison and Ankur Moitra for organizing the program and Feng Gui for his support as a mentor.
We would also like to thank Yuqiu Fu for helpful discussions.

We would like to thank the anonymous referee for thoroughly reading the draft and providing helpful suggestions.
\appendix

\end{document}